\begin{document}


\title{\textbf{Blow-up problems for nonlinear parabolic equations on locally finite graphs}}
\author{Yong Lin \quad Yiting Wu }

\date{}
\maketitle

\vspace{-12pt}


\begin{minipage}{145mm}
\noindent{\small\textbf{Abstract} \; Let $G=(V,E)$ be a locally finite connected weighted graph, $\Delta$ be the usual graph Laplacian.
In this paper,
we study the blow-up problems for the nonlinear parabolic equation $u_t=\Delta u + f(u)$
on $G$. The blow-up phenomenons of the equation are discussed in terms of two cases:
(i) an initial condition is given; (ii) a Dirichlet boundary condition is given.
We prove that if $f$ satisfies appropriate conditions, then the solution of the equation blows up in a finite time.
\newline\textbf{Keywords}: Blow-up,\, Parabolic equations,\, Locally finite graphs
\newline\noindent \textbf{2010 Mathematics Subject Classification}: 35B44; 35K55; 35R02
}
\end{minipage}

\vspace{12pt}

\section{Introduction and main results}

As is known to us, many structures in our real life can be represented by a connected  graph whose vertices represent nodes and whose edges represent their links, such as the internet, brain, organizations and so on.
In recent years, the investigations of discrete weighted Laplacians and various equations on graphs
have attracted attention from many authors
(see \cite{BHJ,CLC,DK,gly1,gly2,gly3,HAESELER,LCY,AW,XXM} and references therein).
There have been some works on dealing with blow-up phenomenon of the equations on graphs, for example, Xin et al. \cite{XXM} investigated the blow-up properties of the Dirichlet boundary value problem for $u_t=\Delta u+u^p$ $(p>0)$
on a finite graph.
However, as far as we know, the blow-up phenomenon on a locally finite graph
has not been studied in the literature.
The main concern of this paper is to discuss the blow-up phenomenon of
the nonlinear parabolic equation $u_t=\Delta u+f(u)$ on a locally finite graph.
This equation is the mathematical model of heat diffusion and can be used to model solid fuel ignition \cite{BD}.
The function $f(u)$ is typically a nonlinear function, such as $u^p (p>1)$.
The main purpose of the paper is to study  blow-up phenomenon of the nonlinear parabolic equation
in terms of the following two cases:
(i) an initial condition is given;
(ii) a Dirichlet boundary condition is given.

Let $G=(V,E)$ be a locally finite connected graph,
where $V$ denotes the vertex set and $E$ denotes the edge set.
For any $T>0$, a function $u=u(t,x)$ is said to be a solution of (1.1) in
$[0,T]\times V$, if the equation (1.1) is satisfied by $u$ in $[0,T]\times V$, meanwhile,
$u$ is bounded and continuous with respect to $t$ in $[0,T]\times V$.
Moreover, we say that a solution $u$ blows up in a finite time $T$, if there exists $x\in V$ such that
$$|u(t,x)|\rightarrow +\infty \quad \textrm{as} \quad t\rightarrow T^-.$$

Focussing on the research goals mentioned above,
in this paper, we first deal with the blow-up phenomenon of the following Cauchy problem on $G$
\begin{equation*}
\tag{1.1}
\begin{cases}
\frac{\partial}{\partial t}u(t,x)=\Delta u(t,x) + f(u(t,x)), &\quad (t,x)\in (0,+\infty)\times V,\\
u(0,x)=a(x), &\quad x\in V,
\end{cases}
\end{equation*}
where $\Delta$ is the $\mu-$Laplacian on $G$.

Next, given a non-empty finite subset $\Omega\subset V$, we denote the
Dirichlet Laplacian on $\Omega^\circ$ by $\Delta_\Omega$.
We consider the blow-up phenomenon arising from the following discrete nonlinear parabolic equations on $G$
\begin{equation*}
\tag{1.2}
\begin{cases}
\frac{\partial}{\partial t}u(t,x)=\Delta_\Omega u(t,x) + f(u(t,x)), &\quad (t,x)\in (0,+\infty)\times \Omega^\circ,\\
u(0,x)=a(x), &\quad x\in \Omega^\circ,\\
u(t,x)=0, &\quad (t,x)\in [0,+\infty)\times \partial\Omega,
\end{cases}
\end{equation*}
where $\partial\Omega$ is the boundary of $\Omega$,\,\,\,$\Omega^\circ$ is the interior of $\Omega$,
which are express by
\begin{center}
$\partial\Omega:=\{x\in \Omega: \,\,\textrm{$\exists$ $y\in \Omega^c$ such that $y$ is adjacent to $x$}\}$,\,\,
$\Omega^\circ:=\Omega \setminus \partial\Omega$.
\end{center}

The earlier blow-up results on parabolic equations on $\mathbb{R}$ are due to Kaplan \cite{K} and Fujita \cite{Fujita1,Fujita2}.
For the finite time blow-up, Osgood \cite{O} gave a criterion, namely, the nonlinear term on the right hand side of equation $u_t=\Delta u+f(u)$ must satisfy
$$\int^\infty_r\frac{d\tau}{f(\tau)}<\infty \,\,\,\textrm{ for some } r>0.$$

In this paper, we consider the blow-up problems for the nonlinear parabolic equation $u_t=\Delta u+f(u)$
on a locally finite graph $G$. We establish our results under assumptions that $f$ satisfies the following properties:\\
(H1) $f$ is continuous in $[0,+\infty)$;\\
(H2) $f(0)\geq 0$ and $f(\tau)>0$ for all $\tau>0$;\\
(H3) $f$ is convex in $[0,+\infty)$;\\
(H4) $\int^\infty_r\frac{d\tau}{f(\tau)}<\infty$ for some $r>0$.\\

Our main results are stated in Theorems 1.1 and 1.2 below.
\newtheorem{remark}{\textbf{Remark}}[section]
\newtheorem{corollary}{\textbf{Corollary}}[section]
\newtheorem{theorem}{\textbf{Theorem}}[section]
\begin{theorem}
\textnormal{Let $G$ be a locally finite connected graph and have the volume growth of positive degree $m$.
Suppose $f$ satisfies the assumptions (H1)-(H4) with $f(0)=0$, and $a(x)$ given by (1.1)
is bounded, non-negative and not trivial in $V$.
Set $$F(r)=\int_r^{+\infty} \frac{d\tau}{f(\tau)}.$$
If there exists a $\theta\in (0,1)$ such that
\begin{equation*}
\tag{1.3}
F(t^{-1})\leq t^{\frac{\theta}{m}}
\end{equation*}
for sufficiently large $t$,
then the non-negative solution $u$ of (1.1) blows up in a finite time.}
\end{theorem}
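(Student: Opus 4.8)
The plan is to reduce the equation on the graph to a scalar Osgood‑type ODE by testing against the heat kernel, and then to feed the volume‑growth hypothesis into a lower bound for the ``initial value'' of that ODE.

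First, since $f(0)=0$, $a\ge 0$ and $a\not\equiv 0$, the comparison principle on a locally finite graph (equivalently, the Duhamel representation $u(t)=e^{t\Delta}a+\int_0^t e^{(t-s)\Delta}f(u(s))\,ds$ together with positivity of $e^{\sigma\Delta}$) gives $u\ge 0$ on its interval of existence; we may therefore assume toward a contradiction that $u$ is global and bounded on $[0,\infty)\times V$. Fix vertices $x_0,y_0$ with $a(y_0)>0$, and for a parameter $T>0$ to be chosen large set
$$g(t):=\big(e^{(T-t)\Delta}u(t,\cdot)\big)(x_0)=\sum_{y\in V}p_{T-t}(x_0,y)\,u(t,y)\,\mu(y),\qquad t\in[0,T],$$
where $p_\sigma$ is the continuous‑time heat kernel of $G$. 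Using $\partial_t e^{(T-t)\Delta}=-\Delta e^{(T-t)\Delta}=-e^{(T-t)\Delta}\Delta$ together with $u_t=\Delta u+f(u)$, differentiation under the rapidly convergent sum yields $g'(t)=\big(e^{(T-t)\Delta}f(u(t,\cdot))\big)(x_0)$. Since the measure $\nu:=p_{T-t}(x_0,\cdot)\mu$ satisfies $\nu(V)\le 1$ and $f$ is convex with $f(0)=0$ (so $f(cs)\ge cf(s)$ for $c\ge 1$, which makes Jensen's inequality valid for sub‑probability measures), we obtain
$$g'(t)\ \ge\ f\big(g(t)\big),\qquad g(0)=\big(e^{T\Delta}a\big)(x_0),\qquad g(t)\le \|u(t,\cdot)\|_\infty .$$

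Second comes the Osgood step. On the interval where $g$ is finite it is increasing (as $g'\ge f(g)>0$), hence $g(t)\ge g(0)>0$; with $F'=-1/f<0$ this gives $\tfrac{d}{dt}F(g(t))=-g'(t)/f(g(t))\le -1$, i.e. $F(g(t))\le F(g(0))-t$. Since $F\ge 0$, the function $g$ cannot remain finite past time $F(g(0))$, and because $g(t)\le \|u(t,\cdot)\|_\infty$ this forces $\sup_V u(t,\cdot)\to\infty$ as $t$ approaches some $T^\ast\le F(g(0))$. Thus $u$ fails to be a bounded solution on $[0,T]\times V$ whenever $T>F(g(0))$; since any ball is a finite set and $u$ is continuous in $t$, a routine argument then isolates a vertex at which $u$ becomes unbounded, i.e. $u$ blows up in a finite time.

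Third, it remains to check that $F(g(0))<T$ for suitable large $T$. Here $g(0)\ge p_T(x_0,y_0)a(y_0)\mu(y_0)$, so a polynomial lower bound for the heat kernel is needed. Writing $d=d(x_0,y_0)$ and keeping the $z=y_0$ term in $p_T(x_0,y_0)=\sum_z p_d(x_0,z)p_{T-d}(z,y_0)\mu(z)$ gives $p_T(x_0,y_0)\ge p_d(x_0,y_0)\mu(y_0)\,p_{T-d}(y_0,y_0)$ with $p_d(x_0,y_0)>0$ (connectedness), so it suffices to bound the on‑diagonal value. From $\tfrac12\le \sum_{y\in B(x_0,r)}p_t(x_0,y)\mu(y)\le p_{2t}(x_0,x_0)^{1/2}\,\mu(B(x_0,r))^{1/2}$, choosing $r\asymp t$ (so that at least half the heat mass lies in $B(x_0,r)$) and using $\mu(B(x_0,r))\le Cr^m$ yields $p_{2t}(x_0,x_0)\ge c\,t^{-m}$, hence $g(0)\ge c'\,T^{-m}$ for large $T$. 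Since $F$ is decreasing, $F(g(0))\le F(c'T^{-m})=F(t^{-1})$ with $t=(c')^{-1}T^{m}\to\infty$, so (1.3) gives $F(g(0))\le t^{\theta/m}=(c')^{-\theta/m}T^{\theta}$. As $\theta\in(0,1)$, this is $<T$ for all sufficiently large $T$, completing the contradiction.

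The main obstacle I expect is exactly the heat‑kernel lower bound $p_T(x_0,y_0)\gtrsim T^{-m}$ extracted from the volume‑growth hypothesis: this is the step that makes the exponent $m$ in (1.3) the correct one, and it may require (and the paper presumably supplies) a little additional structure, e.g. a bound on the jump rates $\deg(x)/\mu(x)$ (guaranteeing both stochastic completeness and the control $r\asymp t$ on where the heat mass sits). The remaining points — justifying termwise differentiation of $g$, and pinning the blow‑up down to a single vertex — are delicate only in a routine way.
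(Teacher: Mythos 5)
Your outline is essentially the paper's own argument: you test the equation against the heat kernel to form $J_T(s)=\sum_{x}\mu(x)p(T-s,\nu,x)u(s,x)$, use convexity of $f$ together with $f(0)=0$ and $\sum_x\mu(x)p(T-s,\nu,x)\le 1$ to get the Osgood differential inequality $J_T'\ge f(J_T)$, integrate it to obtain $F(J_T(0))\ge T$, and then contradict this for large $T$ by a polynomial on-diagonal lower bound for the heat kernel coming from the volume growth, using the room provided by $\theta\in(0,1)$ to absorb constants; the steps you call routine (termwise differentiation of $J_T$ and the identity $\sum_x\mu(x)\Delta p\,u=\sum_x\mu(x)p\,\Delta u$) are exactly the paper's Lemmas 3.1 and 3.2. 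The one substantive deviation is the heat-kernel lower bound: you assert $p_{2t}(x_0,x_0)\ge c\,t^{-m}$ via Cauchy--Schwarz plus the claim that at least half of the heat mass lies in $B(x_0,r)$ with $r\asymp t$, but that confinement claim is precisely the nontrivial input and is not proved in your sketch (it is true under the standing hypothesis $D_\mu<\infty$, e.g. by dominating the number of jumps by a Poisson variable of mean $D_\mu t$, or by a Davies-type bound, but this needs an argument). The paper instead invokes its Proposition 2.2 (from Lin--Wu), which gives $p(T,\nu,\nu)\ge \frac{1}{4V(\nu,C_0T\log T)}\ge \widetilde{C}(T\log T)^{-m}$, i.e. an extra logarithm in the radius; this costs nothing, because the hypothesis $\theta<1$ in (1.3) absorbs the $\log$ in the same way you use it to absorb the constant $c'$ (the paper picks $T$ so large that $\widetilde{C}^{-1/m}\log T<T^{(1-\theta)/\theta}$, whence $F(\widetilde{C}(T\log T)^{-m})<F(T^{-m/\theta})\le T$ by (1.3) at $t=T^{m/\theta}$). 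So your proof becomes complete either by proving the mass-confinement estimate at scale $r\asymp t$ or, more simply, by quoting Proposition 2.2 and rerunning your final paragraph with $(T\log T)^{-m}$ in place of $T^{-m}$; everything else matches the paper's route.
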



\begin{theorem}
\textnormal{Let $G$ be a locally finite connected graph.
Suppose $f$ satisfies the assumptions (H1)-(H4) with $f\in C^1[0,+\infty)$, and $a(x)$ given by (1.2)
is bounded, non-negative and not trivial in $\Omega^\circ$.
If $f(\tau)-\lambda_1 \tau>0$ for $\tau>\kappa$,
then the solution $u$ of (1.2) blows up in a finite time,
where
$\kappa=\int_{\Omega^\circ}a(x)\phi_1(x)>0$,
$\lambda_1=\lambda_1(\Omega)$ is the first eigenvalue of $-\Delta_\Omega$,
$\phi_1(x)$ is the eigenfunction corresponding to $\lambda_1(\Omega)$.}
\end{theorem}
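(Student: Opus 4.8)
The plan is to adapt Kaplan's eigenfunction method to the graph setting. Since $\Omega^\circ$ is finite, (1.2) is an autonomous system of finitely many ODEs whose right-hand side is $C^1$ (hence locally Lipschitz), so it has a unique maximal solution on an interval $[0,T^\ast)$, and by the standard blow-up alternative for ODEs, $T^\ast<\infty$ forces the solution to be unbounded as $t\to T^{\ast-}$; thus it suffices to exclude a global non-negative solution. Before the main estimate I would record two facts: first, comparing with the stationary subsolution $v\equiv 0$ (which is a subsolution because $f(0)\ge 0$) and using $a\ge 0$ together with the zero boundary data, we get $u(t,x)\ge 0$ everywhere; second, normalizing the first Dirichlet eigenfunction so that $\phi_1>0$ on $\Omega^\circ$ and $\int_{\Omega^\circ}\phi_1=1$, the number $\kappa=\int_{\Omega^\circ}a\,\phi_1$ is exactly the initial value of the Kaplan functional
\[
G(t):=\int_{\Omega^\circ}u(t,x)\,\phi_1(x),
\]
and $G(t)\ge 0$.

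The heart of the argument is to differentiate $G$. Using the equation and differentiating the finite sum term by term, $G'(t)=\int_{\Omega^\circ}(\Delta_\Omega u)\,\phi_1+\int_{\Omega^\circ}f(u)\,\phi_1$. The first term is rewritten by the self-adjointness of $\Delta_\Omega$ on $\Omega^\circ$ (Green's formula, using that $u$ and $\phi_1$ both vanish on $\partial\Omega$) together with $-\Delta_\Omega\phi_1=\lambda_1\phi_1$, which gives $\int_{\Omega^\circ}(\Delta_\Omega u)\phi_1=-\lambda_1 G(t)$. Since $\phi_1>0$ with $\int_{\Omega^\circ}\phi_1=1$ and $f$ is convex by (H3), Jensen's inequality gives $\int_{\Omega^\circ}f(u)\phi_1\ge f(G(t))$. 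Hence
\[
G'(t)\ \ge\ f(G(t))-\lambda_1 G(t)\ =:\ g(G(t)),
\]
where $g$ is convex, $g(\tau)>0$ for $\tau>\kappa$ by hypothesis, and $g(\kappa)\ge 0$ by continuity.

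It remains to pass from this differential inequality with $G(0)=\kappa$ to finite-time blow-up. I would first show $G(t)\ge\kappa$ for all $t$ and that $G$ eventually strictly exceeds $\kappa$: in the generic case $g(\kappa)>0$ this is immediate, because $G'\ge g(G)>0$ whenever $G\ge\kappa$, so $G$ is strictly increasing and never returns to $\kappa$. The borderline case $g(\kappa)=0$ is the step I expect to be the main obstacle; I would handle it with the strong maximum principle — on the connected finite graph, $a\ge 0$ and $a\not\equiv 0$ force $u(t,\cdot)>0$ on $\Omega^\circ$ for $t>0$, and the zero Dirichlet condition keeps $u(t,\cdot)$ from being spatially constant, so Jensen is strict for $t>0$ and $G$ cannot stay $\le\kappa$. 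Once $G$ has reached some value $\kappa_1>\kappa$, the rest is an Osgood-type argument: convexity and (H4) imply $f(\tau)/\tau\to\infty$ (a convex $f$ bounded by a linear function would make $\int^\infty d\tau/f$ diverge), hence $g(\tau)/f(\tau)\to 1$ and $\int_{\kappa_1}^{\infty}d\tau/g(\tau)<\infty$; then for $t$ beyond the time $t_1$ with $G(t_1)=\kappa_1$ we have $G'\ge g(G)>0$, so $\frac{d}{dt}\int_{\kappa_1}^{G(t)}\frac{d\tau}{g(\tau)}\ge 1$ and therefore $t-t_1\le\int_{\kappa_1}^{G(t)}\frac{d\tau}{g(\tau)}\le\int_{\kappa_1}^{\infty}\frac{d\tau}{g(\tau)}<\infty$. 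Thus $G$ cannot stay finite for all time; since $\max_{x\in\Omega^\circ}u(t,x)\ge G(t)$, the solution $u$ blows up in a finite time. The points demanding care are the precise normalization of $\phi_1$ (so that $G(0)=\kappa$ and Jensen applies), the exact form of Green's formula for the Dirichlet Laplacian, and the borderline monotonicity step just described.
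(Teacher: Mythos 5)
Your proposal follows essentially the same route as the paper: it is Kaplan's eigenfunction method with the same functional $J(t)=\sum_{x\in\Omega^\circ}\mu(x)u(t,x)\phi_1(x)$, the same use of self-adjointness of $\Delta_\Omega$ together with $-\Delta_\Omega\phi_1=\lambda_1\phi_1$ and Jensen's inequality under the normalization $\sum_{x\in\Omega^\circ}\mu(x)\phi_1(x)=1$ to get $J'\geq f(J)-\lambda_1 J$, and the same Osgood-type integration of the resulting differential inequality. Two points where you differ are improvements in presentation rather than substance: you make the continuation step explicit (finite system of ODEs with $C^1$ right-hand side, blow-up alternative), where the paper only appeals to ``the property of continuation''; and you obtain $\int^{\infty}d\tau/\bigl(f(\tau)-\lambda_1\tau\bigr)<\infty$ from the observation that convexity plus (H4) force $f(\tau)/\tau\to\infty$, which is a cleaner equivalent of the paper's contradiction argument producing a $K$ with $f(\tau)>2\lambda_1\tau$ for $\tau\geq K$ and then comparing $1/(f-\lambda_1\tau)$ with $2/f$.

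The one step of yours that does not hold up is the strict-Jensen patch for the borderline case $f(\kappa)=\lambda_1\kappa$. Equality in Jensen's inequality requires only that $f$ be affine on the range of $u(t,\cdot)$, not that $u(t,\cdot)$ be constant, and (H1)--(H4) permit $f$ to coincide with $\lambda_1\tau$ on $[0,\kappa]$; moreover, when $\Omega^\circ$ is a single vertex the ``not spatially constant'' argument is vacuous. In fact the borderline statement is genuinely false, so no patch can work: take $\Omega^\circ=\{x_0\}$, so that $\lambda_1=m(x_0)/\mu(x_0)$ and $\phi_1(x_0)=1/\mu(x_0)$, choose $a(x_0)=\kappa$, and let $f(\tau)=\lambda_1\tau$ for $0\leq\tau\leq\kappa$ and $f(\tau)=\lambda_1\tau+(\tau-\kappa)^2$ for $\tau>\kappa$; then (H1)--(H4) hold, $f\in C^1$, $f(\tau)-\lambda_1\tau>0$ for $\tau>\kappa$, yet $u\equiv\kappa$ is the unique (global, bounded) solution. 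The paper avoids this by silently using positivity of $f(\tau)-\lambda_1\tau$ on the closed ray $[\kappa,+\infty)$ (this is exactly what makes its integrals (4.16)--(4.17) legitimate), i.e.\ it effectively reads the hypothesis as $f(\tau)-\lambda_1\tau>0$ for $\tau\geq\kappa$. Under that reading your argument is complete and coincides with the paper's; you should simply drop the borderline case rather than try to prove it.
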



\begin{remark}
\textnormal{In particular, if we choose $f(u)=u^{1+\alpha}$ ($\alpha> 0$) in Theorem 1.1, then
\begin{equation*}
F(u)=\int_u^{+\infty} \frac{d\tau}{\tau^{1+\alpha}}=\frac{1}{\alpha}u^{-\alpha}.
\end{equation*}
The Theorem 1.1 shows that the solution of (1.1) blows up if the follow condition (c1) is satisfied.\\
(c1). There exists a $\theta\in (0,1)$ such that for sufficiently large $t$,
\begin{equation*}
\frac{1}{\alpha}t^{\alpha}\leq t^{\frac{\theta}{m}},\quad \textrm{i.e.}, \quad t^{\frac{m\alpha-\theta}{m}}\leq \alpha,
\end{equation*}
where $m$ and $\alpha$ are positive constants.\\
It is easy to find that for sufficiently large $t$ and positive constants $m$, $\alpha$, $\theta$, we have
\begin{equation*}
\begin{split}
(c1)&\Longleftrightarrow
\left\{
\begin{array}{lc}
\exists\, \theta \in (0,1) \textrm{ such that } 0< m\alpha< \theta, \quad \text{for $\alpha<1$}\\
\exists\, \theta \in (0,1) \textrm{ such that } 0< m\alpha\leq \theta, \quad \text{for $\alpha\geq1$}
\end{array}
\right.\\
&\Longleftrightarrow 0<m\alpha <1.
\end{split}
\end{equation*}
}
\end{remark}

The assertions of Remark 1.1 leads to the following result, which was obtained by Lin and Wu in an earlier paper \cite{LW2}.

\begin{corollary}
\textnormal{
Let $G$ be a locally finite connected graph and have the volume growth of positive degree $m$.
If $0<m\alpha<1$, then the solution of the following semilinear heat equation
\begin{equation*}
\left\{
\begin{array}{lc}
u_t=\Delta u + u^{1+\alpha} &\, \text{in $(0,+\infty)\times V$}\\
u(0,x)=a(x) &\, \text{in $V$}
\end{array}
\right.
\end{equation*}
blows up for any bounded, non-negative and non-trivial initial value, where $\alpha>0$.
}
\end{corollary}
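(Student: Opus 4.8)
The plan is to obtain Corollary 1.1 as an immediate application of Theorem 1.1 with the specific choice $f(u)=u^{1+\alpha}$, so that the whole argument reduces to checking that this $f$ meets all the hypotheses of Theorem 1.1 and that the growth condition (1.3) is satisfied precisely when $0<m\alpha<1$. First I would verify the structural assumptions: $f(u)=u^{1+\alpha}$ is continuous on $[0,+\infty)$, giving (H1); $f(0)=0\ge 0$ and $f(\tau)=\tau^{1+\alpha}>0$ for $\tau>0$, giving (H2); since $1+\alpha>1$, the map $\tau\mapsto\tau^{1+\alpha}$ is convex on $[0,+\infty)$, giving (H3); and $\int_r^\infty \tau^{-(1+\alpha)}\,d\tau<\infty$ for any $r>0$ because $\alpha>0$, giving (H4). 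Moreover $f(0)=0$, as explicitly required by Theorem 1.1. The hypotheses that $G$ is locally finite, connected, and has volume growth of positive degree $m$ are assumed verbatim in the corollary, so nothing extra is needed there.

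Next I would compute $F$ and check (1.3). As recorded in Remark 1.1, $F(r)=\int_r^\infty \tau^{-(1+\alpha)}\,d\tau=\tfrac1\alpha r^{-\alpha}$, hence $F(t^{-1})=\tfrac1\alpha t^{\alpha}$. Condition (1.3) therefore asks for a $\theta\in(0,1)$ with $\tfrac1\alpha t^{\alpha}\le t^{\theta/m}$, equivalently $t^{\,\alpha-\theta/m}\le\alpha$, for all sufficiently large $t$. When $0<m\alpha<1$ the open interval $(m\alpha,1)$ is non-empty, so I would fix any $\theta$ in it; then $\alpha-\theta/m<0$, whence $t^{\,\alpha-\theta/m}\to 0$ as $t\to+\infty$, and in particular $t^{\,\alpha-\theta/m}\le\alpha$ once $t$ is large enough. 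Thus (1.3) holds with this $\theta$.

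With (H1)--(H4), $f(0)=0$, and (1.3) all verified, Theorem 1.1 applies and yields that every non-negative solution $u$ of the stated semilinear heat equation with bounded, non-negative, non-trivial initial data blows up in finite time; this recovers the result of Lin and Wu \cite{LW2}. The argument has essentially no obstacle: the only point requiring the slightest care is that the inequality defining (1.3) must be arranged with a strict exponent gap, which is exactly what the strict bound $m\alpha<1$ provides — if $m\alpha=1$ one would be forced to take $\theta=1\notin(0,1)$ and (1.3) would fail, consistent with the threshold behaviour made explicit in Remark 1.1.
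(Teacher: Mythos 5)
Your proposal is correct and follows exactly the paper's route: the paper derives the corollary from Theorem 1.1 via Remark 1.1, computing $F(u)=\frac{1}{\alpha}u^{-\alpha}$ for $f(u)=u^{1+\alpha}$ and observing that condition (1.3) amounts to $0<m\alpha<1$. Your verification of (H1)--(H4) and the explicit choice $\theta\in(m\alpha,1)$ just spells out the details the paper leaves implicit.
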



\begin{remark}
\textnormal{Let $f(u)=u^{1+\alpha}$ ($\alpha> 0$), the Theorem 1.2 shows that the solution of (1.2) blows up if
$$\sum_{x\in \Omega^\circ}\mu(x)a(x)\phi_1(x)\geq \lambda_1^{\frac{1}{\alpha}}.$$
}
\end{remark}


The remaining parts of this paper are organized as follows. In Section 2, we introduce some concepts, notations and known results which are essential to prove the main results of this paper. In Sections 3 and 4, we give the proofs of
Theorems 1.1 and 1.2 respectively.

\section{Preliminaries}

Throughout the paper, we write $y\sim x$ if $y$ is adjacent to $x$, and
allow the edges on the graphs to be weighted. Weights are given by a function $\omega: V \times V\rightarrow[0,\infty)$ satisfying $\omega_{xy}=\omega_{yx}$ and
$\omega_{xy}> 0$ if and only if $x\sim y$.
Furthermore, let $\mu: V\rightarrow \mathbb{R}^+$ be a positive measure on the vertices of the $G$. In this paper, we only focus on the graphs satisfying
$$D_\mu:=\max_{x\in V}\frac{m(x)}{\mu(x)}<\infty,$$
where $m(x):=\sum_{y\sim x}\omega_{xy}.$

\subsection{The Laplace operators on graphs}

A function on a graph is understood as a function defined on its vertex set.
We use the notation $C(V)$ to denote the set of real functions on $V$.
For any $1\leq p<\infty$, we denote  by
$$\ell^p(V,\mu)=\left \{h\in C(V):\sum_{x \in V} \mu(x)|h(x)|^p<\infty\right \}$$
the set of $\ell^p$ integrable functions on $V$ with respect to
the measure $\mu$.  For $p=\infty$, let
$$\ell^{\infty}(V,\mu)=\left\{h\in C(V):\sup_{x\in V}|h(x)|<\infty\right\}.$$

The integration of a function $h\in C(V)$ is defined by
$$\int_V hd\mu=\sum_{x\in V}\mu(x)h(x).$$

For any function $h\in C(V)$, the $\mu-$Laplacian $\Delta$ of $h$ is defined by
$$\Delta h(x)=\frac{1}{\mu(x)}\sum_{y\sim x}\omega_{xy}(h(y)-h(x)),$$
it can be checked that $D_\mu<\infty$ is equivalent to the $\mu-$Laplacian $\Delta$ being bounded on $\ell^p(V,\mu)$ for all $p\in [1,\infty]$ (see \cite{HAESELER}).

Given a finite subset $\Omega\subset V$, denote by $C(\Omega^\circ)$ the set of real functions on $\Omega^\circ$. For any function $h\in C(\Omega^\circ)$, the Dirichlet Laplacian $\Delta_\Omega$ on $\Omega^\circ$ is defined as follows : first extend $h$ to the whole $V$ by setting $h\equiv0$ outside
$\Omega^\circ$ and then set
$$\Delta_\Omega h=(\Delta h)|_{\Omega^\circ}.$$
Thus for any $x\in \Omega^\circ$, we have
\begin{equation*}
\Delta_\Omega h(x)=\frac{1}{\mu(x)}\sum_{y\sim x}\omega_{xy}(h(y)-h(x)),
\end{equation*}
where $h(y)=0$ whenever $y\notin \Omega^\circ$. A simple calculation shows that $-\Delta_\Omega$ is a
positive self-adjoint operator (see \cite{gri09,AW}). We arrange the eigenvalues of $-\Delta_\Omega$ in increasing order, i.e.
$\lambda_1(\Omega)\leq \lambda_2(\Omega)\leq\cdots \leq \lambda_N(\Omega)$, where $N=\#\Omega$.
Indeed, $\lambda_1(\Omega)$ is positive and its corresponding eigenfunction $\phi_1(x)$ can be chosen as $\phi_1(x)>0$ for all $x\in \Omega^\circ$ (see \cite{FC,CLC,gri09}). In this paper, $\phi_1(x)$ is normalized as $\sum_{x\in \Omega^\circ}\mu(x)\phi_1(x)=1$.

\subsection{The volume growth of graphs}

The connected graph can be endowed with its graph distance $d(x,y)$, i.e. the smallest number of edges of a path between two vertices $x$ and $y$, then we define balls $B(x,r)=\{y\in V:d(x,y)\leq r\}$ for any $r\geq0$. The volume of a subset $U$ of $V$ can be written as $V(U)$ and $V(U)=\sum_{x\in U}\mu(x)$, for convenience, we usually abbreviate $V\big(B(x,r)\big)$ by $V(x,r)$. In addition,
we say that a graph has the volume growth of positive degree $m$, if for all $x\in V$, $r\geq 0$,
\begin{equation*}
V(x,r)\leq c_0 r^m,
\end{equation*}
where $c_0$ are some positive constants.

\subsection{The heat kernel on graphs}

We say that a function $p:(0,+\infty)\times V \times V\rightarrow \mathbb{R}$ is a fundamental solution of the heat
equation $u_t=\Delta u$ on $G=(V,E)$, if for any bounded initial condition $u_0:V\rightarrow \mathbb{R}$, the function
$$u(t,x)=\sum_{y\in V}p(t,x,y)u_0(y) \quad (t>0, \, x\in V)$$
is differentiable in $t$ and satisfies the heat equation, and for any $x\in V$,
$\lim\limits_{t\rightarrow0^+}u(t,x)=u_0(x)$ holds.

For completeness, we recall some important properties of the heat kernel $p(t,x,y)$ on a locally finite graph, as follows:
\newtheorem{proposition}{\textbf{Proposition}} [section]
\begin{proposition} [see \cite{AW,RW}]
\textnormal{For $t,s>0$ and any $x,y\in V$, we have\\
(i) \, $p(t,x,y)=p(t,y,x)$,\\
(ii) \, $p(t,x,y)> 0$,\\
(iii) \, $\sum_{y\in V}\mu(y)p(t,x,y)\leq 1$,\\
(iv) \, $\partial_t p(t,x,y)=\Delta_xp(t,x,y)=\Delta_yp(t,x,y)$,\\
(v) \, $\sum_{z\in V}\mu(z)p(t,x,z)p(s,z,y)=p(t+s,x,y)$.}
\end{proposition}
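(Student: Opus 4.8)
The plan is to establish (i)--(v) by constructing $p$ explicitly as the integral kernel of the heat semigroup of $\Delta$. Since $D_\mu<\infty$, the operator $\Delta$ is bounded, self-adjoint and non-positive on $\ell^2(V,\mu)$; self-adjointness and non-positivity follow from the identity $\langle\Delta h,g\rangle=-\tfrac12\sum_{x,y}\omega_{xy}\big(h(x)-h(y)\big)\big(g(x)-g(y)\big)$, the sum converging absolutely thanks to $D_\mu<\infty$. Hence $P_t:=e^{t\Delta}=\sum_{k\ge0}\frac{t^k}{k!}\Delta^k$ is a norm-continuous, self-adjoint semigroup, and I would set $p(t,x,y):=\mu(y)^{-1}\big(P_t\mathbf 1_y\big)(x)$, with $\mathbf 1_y$ the indicator of $\{y\}$. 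Then (i) is immediate from self-adjointness of $P_t$: $\mu(x)\mu(y)p(t,x,y)=\langle P_t\mathbf 1_y,\mathbf 1_x\rangle=\langle\mathbf 1_y,P_t\mathbf 1_x\rangle=\mu(y)\mu(x)p(t,y,x)$. Differentiating the power series term by term gives $\partial_tP_t=\Delta P_t$, hence $\partial_tp(t,x,y)=\Delta_xp(t,x,y)$, and combining this with (i) gives $\partial_tp=\Delta_yp$ as well, which is (iv). For (v) I would use $P_{t+s}=P_tP_s$ together with the elementary fact that $P_t$ acts on $g\in\ell^2(V,\mu)$ by $(P_tg)(x)=\sum_z\mu(z)p(t,x,z)g(z)$ (a consequence of self-adjointness and (i)); applying this to $g=P_s\mathbf 1_y\in\ell^2(V,\mu)$ and dividing by $\mu(y)$ yields $p(t+s,x,y)=\sum_z\mu(z)p(t,x,z)p(s,z,y)$.

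For the positivity (ii) the idea is a Perron--Frobenius shift. Fix any $c\ge D_\mu$; then $\Delta+cI$ maps non-negative functions to non-negative functions, and since $\Delta\mathbf 1=0$ pointwise (a finite sum, by local finiteness) one has $(\Delta+cI)\mathbf 1=c\,\mathbf 1$. Writing $P_t=e^{-ct}\sum_{k\ge0}\frac{t^k}{k!}(\Delta+cI)^k$ and noting that each $(\Delta+cI)^k\mathbf 1_y$ is a finitely supported non-negative function, one gets $p(t,x,y)\ge0$; for strictness, the single term with $k=d(x,y)$ is already strictly positive, since a geodesic $y=z_0\sim z_1\sim\cdots\sim z_d=x$ contributes the positive product $\prod_{i=0}^{d-1}\omega_{z_iz_{i+1}}/\mu(z_i)$ to $\big((\Delta+cI)^d\mathbf 1_y\big)(x)$ while every other contribution is $\ge0$. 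Hence $p(t,x,y)>0$ for all $t>0$.

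Property (iii) is where genuine care is needed: $\sum_y\mu(y)p(t,x,y)$ is morally $(P_t\mathbf 1)(x)$, but the constant function $\mathbf 1$ need not lie in $\ell^2(V,\mu)$ when $\mu(V)=\infty$, so the $\ell^2$ machinery does not apply to $\mathbf 1$ directly. I would get around this by truncation: choose finite sets $V_n\uparrow V$, so that $(P_t\mathbf 1_{V_n})(x)=\sum_{y\in V_n}\mu(y)p(t,x,y)$; pointwise one has $0\le(\Delta+cI)^k\mathbf 1_{V_n}\le(\Delta+cI)^k\mathbf 1=c^k\mathbf 1$, because $\Delta+cI$ preserves non-negativity (as a pointwise operator) and $(\Delta+cI)\mathbf 1=c\,\mathbf 1$; hence
$$\big(P_t\mathbf 1_{V_n}\big)(x)=e^{-ct}\sum_{k\ge0}\frac{t^k}{k!}\big((\Delta+cI)^k\mathbf 1_{V_n}\big)(x)\le e^{-ct}\sum_{k\ge0}\frac{(ct)^k}{k!}=1,$$
and letting $n\to\infty$ (monotone convergence, legitimate by (ii)) gives $\sum_{y\in V}\mu(y)p(t,x,y)\le1$.

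Finally, to identify this $p$ with the heat kernel of the statement, the bound (iii) shows that for bounded $u_0$ the function $u(t,x)=\sum_y\mu(y)p(t,x,y)u_0(y)$ converges absolutely, and approximating $u_0$ by finitely supported functions (using that $\Delta$, hence $P_t$, is bounded on $\ell^\infty(V,\mu)$) identifies $u(t,\cdot)$ with $P_tu_0$, which is differentiable in $t$, solves $u_t=\Delta u$, and tends to $u_0$ as $t\to0^+$ since $\|P_t-I\|_{\ell^\infty\to\ell^\infty}\to0$; uniqueness of such a solution, and hence of the heat kernel, follows from uniqueness for the linear ODE $\dot u=\Delta u$ in $\ell^\infty(V,\mu)$. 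The only real obstacle in this programme is the non-integrability of $\mathbf 1$ entering (iii); the truncation estimate above disposes of it, and everything else is a formal consequence of $\Delta$ being bounded, self-adjoint and non-positive, together with the strict positivity supplied by connectedness and $\omega_{xy}>0$.
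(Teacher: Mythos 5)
The paper does not prove this proposition at all: it is quoted as known material, with the proof delegated to the cited references (Weber; Wojciechowski). Your proposal therefore goes beyond the paper by reconstructing the argument, and the route you take — $\Delta$ bounded, self-adjoint and non-positive on $\ell^2(V,\mu)$ because $D_\mu<\infty$, $p(t,x,y):=\mu(y)^{-1}(e^{t\Delta}\mathbf 1_y)(x)$, symmetry and the semigroup identity from self-adjointness and $P_{t+s}=P_tP_s$, positivity and the sub-stochasticity bound (iii) from the shift $\Delta+cI$ with $c\ge D_\mu$ — is essentially the standard bounded-operator argument of the cited sources, and it is correct. The two genuinely delicate points are handled properly: strict positivity via the $k=d(x,y)$ term of the shifted series (connectedness and $\omega_{xy}>0$), and (iii) via truncation $\mathbf 1_{V_n}\uparrow\mathbf 1$ with the pointwise order-preservation of $\Delta+cI$ and $(\Delta+cI)\mathbf 1=c\mathbf 1$, which correctly avoids the fact that $\mathbf 1\notin\ell^2$ when $\mu(V)=\infty$.

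The only soft spot is the closing identification paragraph, which does not affect (i)--(v) for the kernel you construct. First, "approximating $u_0$ by finitely supported functions using boundedness of $P_t$ on $\ell^\infty$" is not quite right as stated, since $u_0\mathbf 1_{V_n}\to u_0$ fails in $\ell^\infty$-norm in general; the correct repair is local finiteness: for fixed $k$ and $x$, $(\Delta+cI)^k(u_0\mathbf 1_{V_n})(x)$ stabilizes for large $n$, and monotone convergence in the shifted series then gives $\sum_y\mu(y)p(t,x,y)u_0(y)=(e^{t\Delta}u_0)(x)$. Second, the uniqueness claim ("uniqueness for the ODE $\dot u=\Delta u$ in $\ell^\infty$") is not automatic from the paper's notion of fundamental solution, which only demands pointwise differentiability in $t$ and does not by itself place $u(t,\cdot)$ in $\ell^\infty$ or make $t\mapsto u(t,\cdot)$ norm-differentiable; one would need a Duhamel or maximum-principle argument to promote a pointwise bounded solution to the Banach-space setting. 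Since the proposition only lists properties of the heat kernel — which in the cited references is by definition the kernel of $e^{t\Delta}$ that you construct — this gap is peripheral rather than fatal.
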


Lin et al. \cite{LW1} utilized the volume growth condition to obtain a on-diagonal lower estimate of heat kernel on locally finite graphs for large time. We recall it bellow.

\begin{proposition}[see \cite{LW1}]
\textnormal{Assume that $G$ satisfies the volume growth, then for large enough $t$,
\begin{equation*}
p(t,x,x)\geq \frac{1}{4V(x,C_0 t\log t)},
\end{equation*}
where $C_0>2D_\mu e$.}
\end{proposition}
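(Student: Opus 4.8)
The plan is to obtain the on-diagonal lower bound from three ingredients: the semigroup identity of Proposition 2.1, a Cauchy--Schwarz inequality localized to a ball, and an ``escape of mass'' estimate controlling how much heat leaves $B(x,C_0 t\log t)$ by time $t$. First I would combine the Chapman--Kolmogorov identity (Proposition 2.1(v), with $s=t$ and $y=x$) and symmetry (i) to write
$$p(2t,x,x) = \sum_{z\in V}\mu(z)\,p(t,x,z)\,p(t,z,x) = \sum_{z\in V}\mu(z)\,p(t,x,z)^2.$$
Because every term is non-negative by (ii), I may discard the vertices outside a ball $B=B(x,R)$ and keep only $z\in B$. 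Applying Cauchy--Schwarz to $\sum_{z\in B}\sqrt{\mu(z)}\cdot\sqrt{\mu(z)}\,p(t,x,z)$ gives
$$\left(\sum_{z\in B}\mu(z)p(t,x,z)\right)^2 \leq V(x,R)\sum_{z\in B}\mu(z)p(t,x,z)^2 \leq V(x,R)\,p(2t,x,x),$$
so that $p(2t,x,x)\geq \frac{1}{V(x,R)}\left(\sum_{z\in B}\mu(z)p(t,x,z)\right)^2$. Everything then reduces to showing that at least half of the heat mass remains inside $B(x,R)$, i.e. $\sum_{z\in B(x,R)}\mu(z)p(t,x,z)\geq \frac12$ for $R=C_0 t\log t$ and large $t$; this yields $p(2t,x,x)\geq \frac{1}{4V(x,R)}$, and a rescaling $t\mapsto t/2$ together with the monotonicity of $r\mapsto V(x,r)$ converts the radius into $C_0 t\log t$ and gives the stated inequality.

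The main obstacle is the escape-of-mass estimate. Since $D_\mu<\infty$, the operator $\Delta$ generates a non-explosive continuous-time Markov chain $(X_s)$ whose total jump rate at every vertex is $\frac{m(\cdot)}{\mu(\cdot)}\leq D_\mu$; in particular the semigroup is conservative, strengthening (iii) to $\sum_z\mu(z)p(t,x,z)=1$, and $p(t,x,\cdot)\,d\mu$ is the law of $X_t$ started at $x$. As each jump moves to an adjacent vertex, $d(X_t,x)$ is at most the number $N_t$ of jumps up to time $t$, and $N_t$ is dominated by a Poisson variable of parameter $D_\mu t$, whence
$$\sum_{z:\,d(x,z)\geq R}\mu(z)p(t,x,z)=\mathbb{P}_x\left(d(X_t,x)\geq R\right)\leq \sum_{k\geq R}\frac{(D_\mu t)^k}{k!}.$$
I would estimate this tail using $k!\geq (k/e)^k$, so each summand is at most $\left(\frac{eD_\mu t}{k}\right)^k$. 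Here the hypothesis $C_0>2D_\mu e$ enters decisively: for $R=C_0 t\log t$ and every $k\geq R$ one has $\frac{eD_\mu t}{k}\leq \frac{eD_\mu}{C_0\log t}<\frac12$ once $t$ is large, so the tail is dominated by $\sum_{k\geq R}2^{-k}=2^{-R+1}\to 0$, in particular below $\frac12$. Combined with conservation this gives $\sum_{z\in B(x,R)}\mu(z)p(t,x,z)=1-\mathbb{P}_x(d(X_t,x)\geq R)\geq \frac12$, closing the argument.

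The step requiring the most care is precisely this escape estimate and the pinning of the constant. The same bound can be obtained analytically, without probability, by the exponential-moment method: set $H_\lambda(t)=\sum_z\mu(z)p(t,x,z)e^{\lambda d(x,z)}$, and use $\partial_t p=\Delta_z p$ together with the self-adjointness of $\Delta$ to get $H_\lambda'(t)=\sum_z\mu(z)p(t,x,z)\,\Delta e^{\lambda d(x,\cdot)}(z)$; since $|d(x,y)-d(x,z)|\leq 1$ for $y\sim z$, one checks $\Delta e^{\lambda d(x,\cdot)}(z)\leq D_\mu(e^\lambda-1)e^{\lambda d(x,z)}$, so $H_\lambda'(t)\leq D_\mu(e^\lambda-1)H_\lambda(t)$ and $H_\lambda(t)\leq e^{D_\mu(e^\lambda-1)t}$. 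A Chebyshev bound then controls the escape by $e^{-\lambda R+D_\mu(e^\lambda-1)t}$, and optimizing over $\lambda$ reproduces the Poisson large-deviation rate and the requirement $C_0>2D_\mu e$. Either route requires only care with the convergence of the weighted sums, which is guaranteed by local finiteness and the fast decay of $p(t,x,\cdot)$.

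Finally, I would point out that the stated volume-growth hypothesis is not actually needed for the displayed inequality itself: local finiteness already makes each ball finite, so $V(x,R)<\infty$, and the chain of estimates above never uses $V(x,r)\leq c_0 r^m$. The growth condition serves only to convert the bound into an explicit power of $t$, namely $p(t,x,x)\gtrsim (t\log t)^{-m}$, and to fix the setting in which this on-diagonal estimate is subsequently applied.
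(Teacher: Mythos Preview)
The paper does not actually prove this proposition: it is quoted from \cite{LW1} with only a citation, and no argument is given in the present paper. So there is no ``paper's own proof'' to compare against here.

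That said, your proposal is a correct and standard route to such an on-diagonal lower bound. The three-step scheme (Chapman--Kolmogorov plus Cauchy--Schwarz on a ball, then an escape-of-mass estimate for the heat semigroup) is exactly the mechanism one expects, and your Poisson-tail domination via $N_t\preceq\mathrm{Pois}(D_\mu t)$ is legitimate because the jump rate is uniformly bounded by $D_\mu$. The alternative analytic derivation through the exponential moment $H_\lambda$ is also valid and is essentially the same large-deviation computation. Your final remark is accurate: the polynomial volume-growth hypothesis plays no role in the displayed inequality itself and only serves, later in the paper (inequality~(3.10)), to turn $1/V(x,C_0 t\log t)$ into a power of $t$.

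One small point worth tightening: as written, your tail bound shows that for \emph{any} $C_0>0$ the escape probability is eventually below $\tfrac12$ once $t$ is large, so your argument does not single out the threshold $C_0>2D_\mu e$. That constant presumably reflects the precise estimate in \cite{LW1} (for instance, making the inequality valid already from $\log t\geq 1$ rather than from some $C_0$-dependent threshold). This does not affect correctness, but you may want to note explicitly that your proof yields the statement for every $C_0>0$ at the cost of a possibly larger ``large enough $t$''.
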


\section{Proof of Theorem 1.1}

As a preparation for the proof of Theorem 1.1, we first introduce two lemmas.


\newtheorem{lemma}{\textbf{Lemma}}[section]
\begin{lemma}
\textnormal{Let $G$ be a locally finite connected graph.
For any $T>0$, if $g$ is bounded in $V$, then for any $x\in V$,
$\sum_{y\in V} \mu(y)p(t,x,y)g(y)$
converges uniformly in $(0,T]$.}
\end{lemma}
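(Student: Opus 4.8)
The statement asserts that for bounded $g$ on $V$, the series $\sum_{y\in V}\mu(y)p(t,x,y)g(y)$ converges uniformly for $t\in(0,T]$. The natural approach is a Weierstrass-type argument, but the obvious bound $\mu(y)p(t,x,y)|g(y)|\le \|g\|_\infty\,\mu(y)p(t,x,y)$ only gives that the partial sums are bounded (by $\|g\|_\infty$, via Proposition 2.1(iii)), not that the tails are small uniformly in $t$. So the real content is to show that the tail $\sum_{y\notin W}\mu(y)p(t,x,y)$ can be made small, uniformly for $t\in(0,T]$, by choosing a large enough finite set $W\subset V$. I would isolate this as the heart of the proof.

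First I would fix $x$ and set $q(t,y):=\mu(y)p(t,x,y)\ge 0$, noting $\sum_{y\in V}q(t,y)\le 1$ for all $t>0$ by Proposition 2.1(iii). The plan is to show that the function $t\mapsto q(t,y)$ extends continuously to $t=0$ with value $0$ for every $y\neq x$ (and value $\mu(x)$... actually $\le 1$ at $y=x$): indeed by the defining property of the heat kernel, $u(t,z)=\sum_{y}p(t,z,y)u_0(y)\to u_0(z)$ as $t\to0^+$ for bounded initial data $u_0$; taking $u_0=\mathbf 1_{\{y_0\}}$ for a fixed vertex $y_0$ gives $p(t,z,y_0)\to \mathbf 1_{\{y_0\}}(z)$, hence $q(t,y_0)=\mu(y_0)p(t,y_0,x)\to 0$ for $y_0\neq x$ using symmetry (i). Combined with differentiability in $t$ on $(0,\infty)$ (Proposition 2.1(iv)), each $q(\cdot,y)$ is continuous on $[0,T]$.

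Next, enumerate $V=\{y_1,y_2,\dots\}$ and consider $R_n(t):=\sum_{k>n}q(t,y_k)=\Big(\sum_{k\le n}q(t,y_k)\Big)$ subtracted from $\sum_{y\in V}q(t,y)$. Here one must be slightly careful because $\sum_{y\in V}q(t,y)$ need not equal $1$, only be $\le 1$; but one can still argue as follows. Fix $\varepsilon>0$. For each fixed $t$, since the series converges, there is $n(t)$ with $R_{n(t)}(t)<\varepsilon$. To upgrade this to uniformity on the compact interval $[0,T]$ (we include $0$ using the previous paragraph), observe that $R_n(t)$ is nonincreasing in $n$, and each partial sum $S_n(t):=\sum_{k\le n}q(t,y_k)$ is continuous on $[0,T]$ and nondecreasing in $n$, converging pointwise to the continuous-on-$(0,T]$ (and, as shown, continuous at $0$ with value $\le 1$; here actually $S_n(0)\to \mu(x)p(0^+,x,x)$-type value) limit $S(t):=\sum_{y}q(t,y)$. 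The key point is that $S$ itself is continuous on $[0,T]$: this needs a small argument, e.g. $S$ is a pointwise limit of an increasing sequence of continuous functions, hence lower semicontinuous, while $S\le 1$ and $S_n\le S$; one shows $S$ is also upper semicontinuous by a dominated-convergence/equicontinuity estimate using $|q(t,y)-q(s,y)|\le \int_s^t|\partial_r q(r,y)|\,dr$ and the bound $|\partial_r q(r,y)|=\mu(y)|\Delta_x p(r,x,y)|\le D_\mu(\mu(y)p(r,x,y)+\sum_{z\sim x}\dots)$, which is summable. Once $S$ is known continuous on $[0,T]$, Dini's theorem applies to the increasing sequence $S_n\uparrow S$ of continuous functions on the compact set $[0,T]$, giving $S_n\to S$ uniformly, i.e. $R_n(t)=S(t)-S_n(t)\to 0$ uniformly on $[0,T]$, in particular on $(0,T]$. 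Then for $n$ large, $\sup_{t\in(0,T]}\sum_{k>n}\mu(y_k)p(t,x,y_k)|g(y_k)|\le \|g\|_\infty\sup_{t}R_n(t)<\|g\|_\infty\varepsilon$, which is the desired uniform Cauchy criterion.

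The main obstacle is exactly the continuity of the total mass $S(t)=\sum_{y}\mu(y)p(t,x,y)$ up to and including $t=0$ (and its behavior there), which is what makes Dini's theorem applicable; the termwise continuity and the crude $\ell^1$ bound are routine, but without uniform control of the tail one cannot pass from pointwise to uniform convergence. An alternative that sidesteps Dini is to estimate the tail directly via a parabolic maximum principle: the function $v(t,z):=\sum_{y\notin W}p(t,z,y)$ solves the heat equation with initial data $\mathbf 1_{V\setminus W}$, so by comparison $0\le v(t,x)\le$ (solution starting from a small bump), but making this uniform in $t\in(0,T]$ still ultimately rests on the same continuity/escape-of-mass fact, so I would present the Dini argument as the cleanest route.
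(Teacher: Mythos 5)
Your proposal is essentially correct, but it takes a genuinely different route from the paper. The paper's proof is a two-line argument riding on the semigroup representation quoted from Weber: since $D_\mu<\infty$ makes $\Delta$ a bounded operator, one has $P_tg(x)=e^{t\Delta}g(x)=\sum_{k\ge0}\frac{t^k\Delta^k}{k!}g(x)=\sum_{y}\mu(y)p(t,x,y)g(y)$, and with $\lvert\Delta^k g\rvert\le (2D_\mu)^k\lVert g\rVert_\infty$ the Weierstrass M-test gives uniform convergence of the exponential series on $(0,T]$. You instead attack the vertex-indexed series directly, proving uniform smallness of the tails $\sum_{y\notin W}\mu(y)p(t,x,y)$ via termwise continuity at $t=0^+$ (from the defining property of the heat kernel with $u_0=\mathbf 1_{\{y_0\}}$), an $\ell^1$-equicontinuity bound coming from $\partial_t p=\Delta_x p$ together with $\sum_y\mu(y)p(t,x,y)\le1$, and Dini's theorem on $[0,T]$. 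What your approach buys is that it proves literally the stated assertion about the sum over vertices, which is what is actually invoked later to interchange $\sum_x$ with $\partial_s$ in $J_T$ (the paper's argument establishes uniform convergence in $t$ of a series indexed by $k$, and the passage back to the vertex series is left implicit via (3.1)); what it costs is more heat-kernel machinery and two delicate points you only sketch: (i) the identification $\lim_{t\to0^+}\sum_y\mu(y)p(t,x,y)=\sum_y\lim_{t\to0^+}\mu(y)p(t,x,y)$ needed for continuity of the limit function at $t=0$ in Dini's theorem — this does follow from your own estimate, since $\sum_y\mu(y)\lvert\partial_r p(r,x,y)\rvert\le 2D_\mu$ makes $t\mapsto \mu(\cdot)p(t,x,\cdot)$ Lipschitz, hence Cauchy, in $\ell^1$ as $t\to0^+$, and the $\ell^1$ limit must agree with the pointwise one — and (ii) the value of the diagonal term at $t=0^+$ ($\mu(x)$ versus $1$), an ambiguity inherited from the paper's Section 2.3 definition, which omits the weight $\mu(y)$ that reappears in (3.1); under the standard normalization $\mu(y)p(t,x,y)\to\delta_{xy}$ this is harmless, but as written it could conflict with $\sum_y\mu(y)p(t,x,y)\le1$ when $\mu(x)>1$, so you should fix the convention before running the Dini argument.
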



\begin{proof}[Proof]
We begin with recalling a previous result which was obtained in \cite{AW}.
If $\Delta$ is a bounded operator, then we have
\begin{equation*}
\tag{3.1}
P_t g(x)=e^{t\Delta}g(x)=\sum_{k=0}^{+\infty}\frac{t^k \Delta^k}{k!}g(x)=\sum_{y\in V}\mu(y)p(t,x,y)g(y).
\end{equation*}

Since $g$ is bounded in $V$, we can assume $|g(x)|\leq A$ in $V$, then
\begin{equation*}
|\Delta g(x)|=\Bigg |\frac{1}{\mu(x)}\sum_{y\sim x}\omega_{xy}\big(g(y)-g(x)\big)\Bigg |\leq 2D_\mu A.
\end{equation*}

By iteration, we obtain for any $k\in\mathbb{N}$ and $x\in V$,
$$\big |\Delta^kg(x)\big |\leq2^kD_\mu^kA.$$

Thus for any $t\in (0,T]$ and $x\in V$,
$$\Bigg | \frac{t^k\Delta^k}{k!}g(x) \Bigg | \leq \Bigg | \frac{T^k\Delta^k}{k!}g(x) \Bigg |
\leq \frac{T^k}{k!}2^kD_\mu^kA.$$

In view of
$$\sum_{k=0}^{+\infty}\frac{T^k}{k!}2^kD_\mu^kA=Ae^{2D_\mu T}<\infty,$$
we deduce that $\sum_{k=0}^{+\infty}\frac{t^k \Delta^k}{k!}g(x)$
converges uniformly in $(0,T]$.

This completes the proof of Lemma 3.1.
\end{proof}


\begin{lemma}
\textnormal{Let $G$ be a locally finite connected graph.
For any $t>0$ and $x\in V$, if $g$ is bounded in $V$, then we have
\begin{equation*}
\tag{3.2}
\sum_{y\in V} \mu(y)\Delta p(t,x,y)g(y)=\sum_{y\in V} \mu(y)p(t,x,y)\Delta g(y).
\end{equation*}
}
\end{lemma}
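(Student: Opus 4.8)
The plan is to reduce the claimed identity to the self-adjointness of the heat kernel together with the symmetry property $p(t,x,y)=p(t,y,x)$ from Proposition 2.1(i) and the fact that $\Delta$ is a bounded operator (so all the sums in sight converge absolutely and may be freely rearranged). First I would rewrite the left-hand side using Proposition 2.1(iv), which gives $\Delta_y p(t,x,y)=\partial_t p(t,x,y)$; alternatively, and more directly, I would expand $\Delta_y p(t,x,y)$ by its definition
$$\Delta p(t,x,\cdot)(y)=\frac{1}{\mu(y)}\sum_{z\sim y}\omega_{yz}\bigl(p(t,x,z)-p(t,x,y)\bigr),$$
so that the left-hand side of (3.2) becomes a double sum
$$\sum_{y\in V}\sum_{z\sim y}\omega_{yz}\bigl(p(t,x,z)-p(t,x,y)\bigr)g(y).$$
The right-hand side similarly expands to $\sum_{y\in V}\sum_{z\sim y}\omega_{yz}p(t,x,y)\bigl(g(z)-g(y)\bigr)$. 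The key algebraic step is then a Fubini/reindexing argument: because $\omega_{yz}=\omega_{zy}$ and the edge relation is symmetric, swapping the roles of the summation indices $y$ and $z$ in one of the two double sums converts it into the other. This is exactly the discrete ``integration by parts'' (Green's) formula $\sum_y \mu(y)\,\Delta h(y)\,g(y)=\sum_y \mu(y)\,h(y)\,\Delta g(y)$ applied with $h(\cdot)=p(t,x,\cdot)$.

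To make the rearrangement rigorous I would first check absolute convergence of the relevant double sums. Since $g$ is bounded, say $|g|\le A$, and since $\Delta$ is bounded on $\ell^\infty(V,\mu)$ with $\|\Delta g\|_\infty\le 2D_\mu A$ (as in the proof of Lemma 3.1), the right-hand side of (3.2) is dominated by $2D_\mu A\sum_{y}\mu(y)p(t,x,y)\le 2D_\mu A$ by Proposition 2.1(iii), hence converges absolutely. For the left-hand side I would use that $p(t,x,\cdot)\in\ell^1(V,\mu)$ (again Proposition 2.1(iii)) and that $\Delta$ maps $\ell^1(V,\mu)$ to $\ell^1(V,\mu)$ boundedly under $D_\mu<\infty$, so $\sum_y\mu(y)|\Delta_y p(t,x,y)|<\infty$ and the sum $\sum_y \mu(y)\,\Delta_y p(t,x,y)\,g(y)$ converges absolutely as well. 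With absolute convergence of both iterated double sums in hand, Fubini's theorem for sums licenses the interchange of the order of summation, and the symmetry $\omega_{yz}=\omega_{zy}$ finishes the identification of the two sides.

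The main obstacle is purely bookkeeping: one must be careful that every term $p(t,x,z)$ and $g(z)$ appearing after the reindexing ranges over all of $V$ (the local finiteness of $G$ guarantees that for each fixed $y$ only finitely many $z$ contribute, so the inner sums are genuinely finite), and that no boundary terms are lost — which is automatic here because we are on all of $V$ with $p(t,x,\cdot)\in\ell^1$ and $g\in\ell^\infty$, so the ``integration by parts'' has no boundary contribution. I do not anticipate any analytic difficulty beyond invoking $D_\mu<\infty$ to control the $\ell^1\to\ell^1$ and $\ell^\infty\to\ell^\infty$ norms of $\Delta$; the heart of the matter is the symmetric double-sum manipulation, i.e. the discrete Green's formula.
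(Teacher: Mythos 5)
Your proposal is correct and follows essentially the same route as the paper: expand both sides as double sums over edges, use the symmetry $\omega_{yz}=\omega_{zy}$ to reindex (discrete Green's formula), and justify the interchange of summation by absolute convergence coming from the boundedness of $g$, $D_\mu<\infty$, and $\sum_{y}\mu(y)p(t,x,y)\leq 1$. The only cosmetic difference is that you bound the two sides separately (via $\|\Delta g\|_\infty\leq 2D_\mu A$ and the $\ell^1$-boundedness of $\Delta$), whereas the paper checks absolute convergence of the single mixed double sum, but the substance is identical.
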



\begin{proof}[Proof]
A direct computation yields
\begin{equation*}
\begin{split}
\sum_{y\in V} \mu(y)\Delta p(t,x,y)g(y)=&
\sum_{y\in V}\sum_{z\in V}\omega_{yz}\big(p(t,x,z)g(y)-p(t,x,y)g(y)\big)\\
=&\sum_{y\in V}\sum_{z\in V}\omega_{yz}p(t,x,z)g(y)-\sum_{y\in V}\sum_{z\in V}\omega_{yz}p(t,x,y)g(y)\\
=&\sum_{z\in V}\sum_{y\in V}\omega_{yz}p(t,x,y)g(z)-\sum_{y\in V}\sum_{z\in V}\omega_{yz}p(t,x,y)g(y)\\
=&\sum_{y\in V}\sum_{z\in V}\omega_{yz}p(t,x,y)g(z)-\sum_{y\in V}\sum_{z\in V}\omega_{yz}p(t,x,y)g(y)\\
=&\sum_{y\in V} \mu(y)p(t,x,y)\Delta g(y).
\end{split}
\end{equation*}

Note that the above summations can be exchanged, since
\begin{equation*}
\begin{split}
\sum_{y\in V}\sum_{z\in V}\big|\omega_{yz}p(t,x,y)g(z)\big|
&\leq \sum_{y\in V}\mu(y)p(t,x,y)\left(\sum_{z\in V}\frac{\omega_{yz}}{\mu(y)}\left|g(z)\right|\right)\\
&\leq D_\mu A.
\end{split}
\end{equation*}

The Lemma 3.2 is proved.
\end{proof}


\begin{proof}[Proof of Theorem 1.1.]
Suppose that there exists a non-negative solution $u=u(t,x)$ of (1.1) in $[0,+\infty)\times V$.
Since $a(x)$ is non-negative and not trivial in $V$, we can assume that $a(\nu)>0$ with $\nu\in V$.

Taking an arbitrary $T>0$, we put
\begin{equation*}
\tag{3.3}
J_T(s)=\sum_{x\in V}\mu(x)p(T-s,\nu,x)u(s,x) \quad (0\leq s< T).
\end{equation*}
Obviously, $J_T$ is continuous with respect to $s$.
Since $u$ is bounded, according to Lemma 3.1,
we know that $J_T$ exists even though $G$ is locally finite.


Firstly, we show that $J_T$ is positive for all $s\in [0,T)$.
In fact, since $u(s,\nu)$ is non-negative in $[0,T)$ and $f$ is non-negative in $[0,+\infty)$,
for all $0\leq s <T$, it follows that
\begin{equation*}
\tag{3.4}
\frac{\partial u}{\partial s}(s,\nu)-\Delta u(s,\nu)\geq 0.
\end{equation*}

Note that
\begin{equation*}
\begin{split}
\Delta u(s,\nu)&=\frac{1}{\mu(\nu)}\sum_{y\sim \nu}\omega_{\nu y}\big(u(s,y)-u(s,\nu)\big)\\
&\geq -\frac{1}{\mu(\nu)}\sum_{y\sim \nu}\omega_{\nu y}u(s,\nu)\\
&\geq -D_\mu u(s,\nu),
\end{split}
\end{equation*}
then the inequality (3.4) gives
\begin{equation*}
\frac{\partial u}{\partial s}(s,\nu)\geq -D_\mu u(s,\nu),
\end{equation*}
which, together with $a(\nu)=u(0,\nu)>0$, yields
\begin{equation*}
u(s,\nu)\geq u(0,\nu)\exp(-D_\mu s)>0, \quad s\in [0,T).
\end{equation*}

Hence, for all $0\leq s < T$, we have
$$\sum_{x\in V}u(s,x)>0.$$
In view of the fact that $p(T-s,\nu,x)$ is positive,
we obtain $J_T(s)$ is positive in $[0,T)$.


Secondly, we shall prove that $J_T$ is differentiable with respect to $s$ and satisfies the following equation
\begin{equation*}
\tag{3.5}
\frac{d}{ds}J_T(s)=\sum_{x\in V} \mu(x)p(T-s,\nu,x)f(u(s,x)).
\end{equation*}

Since $u$ is bounded, by Lemma 3.1, we know that $J_T$ is uniformly convergent. Note also that,
$f$ is continuous in $[0,+\infty)$, which and the boundedness of $u$ imply that $f(u)$ is bounded.
Following from this and Lemma 3.1, we obtain that $\frac{d}{ds}J_T$ is also uniformly convergent.
Hence we can exchange the order of summation and derivation, get
\begin{equation*}
\frac{d}{ds}J_T(s)
=\sum_{x\in V} \left( \mu(x)\frac{\partial}{\partial s}p(T-s,\nu,x)u(s,x)
+\mu(x)p(T-s,\nu,x)\frac{\partial}{\partial s}u(s,x) \right).
\end{equation*}

From the property of heat kernel and Lemma 3.1, Lemma 3.2, we have
\begin{equation*}
\begin{split}
\frac{d}{ds}J_T(s)
=&\sum_{x\in V} \left( - \mu(x)\Delta p(T-s,\nu,x)u(s,x)+\mu(x)p(T-s,\nu,x)
\Big( \Delta u(s,x)+ f(u(s,x)) \Big) \right)\\
=&-\sum_{x\in V} \mu(x)\Delta p(T-s,\nu,x)u(s,x)+
\sum_{x\in V} \mu(x)p(T-s,\nu,x)\Delta u(s,x)\\
&+\sum_{x\in V} \mu(x)p(T-s,\nu,x)f(u(s,x))\\
=&\sum_{x\in V} \mu(x)p(T-s,\nu,x)f(u(s,x)).
\end{split}
\end{equation*}


Thirdly, we need to show that
\begin{equation*}
\tag{3.6}
\frac{d}{ds}J_T\geq f(J_T).
\end{equation*}

Suppose $\sum_{i=1}^n k_i \leq 1$ and $k_i\in [0,1)$, then there exists $k^*\in [0,1)$ such that $\sum_{i=1}^n k_i+k^* = 1$. Since $f$ is convex in $[0,+\infty)$, using Jensen's inequality,
for any $x_1,x_2,\cdots,x_n\in [0,+\infty)$, we have
\begin{equation*}
k_1f(x_1)+k_2f(x_2)+\cdots +k_nf(x_n)+k^*f(0)\geq f(k_1x_1+k_2x_2+\cdots +k_nx_n+k^*\cdot 0).
\end{equation*}
In view of $f(0)=0$, we have
\begin{equation*}
k_1f(x_1)+k_2f(x_2)+\cdots +k_nf(x_n)\geq f(k_1x_1+k_2x_2+\cdots +k_nx_n).
\end{equation*}

Especially, since $f$ is continuous, if $\sum_{i=1}^{\infty}k_if(x_i)$ and $\sum_{i=1}^{\infty}k_ix_i$ converge,
we have
\begin{equation*}
\tag{3.7}
\sum_{i=1}^{\infty}k_if(x_i)\geq f\left(\sum_{i=1}^{\infty}k_ix_i \right),
\end{equation*}
where $\sum_{i=1}^{\infty} k_i \leq 1$, \, $(0\leq k_i< 1)$.

We have shown that $J_T$ and $\frac{d}{ds}J_T$ both are convergent, which combines with (3.7) and
\begin{equation*}
\sum_{x\in V}\mu(x)p(T-s,\nu,x)\leq 1
\end{equation*}
to obtain
\begin{equation*}
\sum_{x\in V} \mu(x)p(T-s,\nu,x)f(u(s,x))\geq f\left( \sum_{x\in V}\mu(x)p(T-s,\nu,x)u(s,x) \right),
\end{equation*}
this is the desired inequality (3.6).


Next, we consider the following function:
\begin{equation*}
\begin{split}
Q(s)&= F(J_T(0))-F(J_T(s))\\
&=\int_{J_T(0)}^{J_T(s)}\frac{1}{f(\tau)}d\tau
\quad\quad\quad (0\leq s< T).
\end{split}
\end{equation*}
It is not difficult to find that this function is well defined, since
$J_T(s) > 0$ for any $s\in [0,T)$ and $f(\tau)> 0$ for all $\tau> 0$.

We observe from (3.6) that for any $s\in [0,T)$,
\begin{equation*}
Q'(s)= J'_T(s) \frac{1}{f(J_T(s))}\geq 1.
\end{equation*}

Owing to $Q(0)=0$ and using the Mean-value theorem, for any $0<\varepsilon<T$, we get
\begin{equation*}
\tag{3.8}
Q(T-\varepsilon)\geq T-\varepsilon.
\end{equation*}

Since $f(\tau)> 0$ for all $\tau> 0$ and $J_T(s) > 0$ for any $s\in [0,T)$,
we conclude that
$F(J_T(s))$ is positive for all $s\in [0,T)$.
Furthermore, we deduce from inequality (3.8) that
\begin{equation*}
F(J_T(0))> F(J_T(0))-F(J_T(T-\varepsilon))=Q(T-\varepsilon) \geq T-\varepsilon.
\end{equation*}
Letting $\varepsilon\rightarrow 0$, we obtain
\begin{equation*}
\tag{3.9}
F\left(\sum_{x\in V}\mu(x)p(T,\nu,x)a(x)\right)\geq T.
\end{equation*}


From the given condition $V(x,r)\leq c_0 r^m$ $(c_0>0,r\geq 0,m>0)$ and the Proposition 2.2,
we have for large enough $T$,
\begin{equation*}
\tag{3.10}
p(T,\nu,\nu)\geq \frac{1}{4c_0C_0^m}\left(T\log T\right)^{-m}.
\end{equation*}

Hence, for sufficiently large $T$, we have
\begin{equation*}
\tag{3.11}
\begin{split}
&\sum_{x\in V}\mu(x)p(T,\nu,x)a(x)\\
\geq &\mu(\nu)p(T,\nu,\nu)a(\nu)\\
\geq &\widetilde{C}(T\log T)^{-m},
\end{split}
\end{equation*}
where $\widetilde{C}=\frac{\mu(\nu)a(\nu)}{4c_0C_0^m}>0$.

Let us come back to the inequality (3.9), which, together with (3.11) and the fact that
$F$ is non-increasing,
one obtains
\begin{equation*}
\tag{3.12}
F\left(\widetilde{C}(T\log T)^{-m}\right)\geq T
\end{equation*}
for large enough $T$.

On the other hand, it is easy to observe from the limit
$$\lim_{T \rightarrow +\infty}\frac{\widetilde{C}^{-\frac{1}{m}}\log T}{T^\delta}=0 \quad \quad
(\delta>0)$$
that
$$\widetilde{C}^{-\frac{1}{m}}\log T < T^\delta \quad \quad (\delta>0)$$
for sufficiently large $T$.
So, we can choose a $T_1>0$ such that
$$\widetilde{C}^{-\frac{1}{m}}\log T< T^{\frac{1-\theta}{\theta}} \quad \quad (0<\theta<1)$$
for all $T>T_1$. Since $f(\tau)>0$ for $\tau>0$, we find that $F$ is strictly decreasing
in $(0,+\infty)$.
Thus we have
\begin{equation*}
\tag{3.13}
F\left(\widetilde{C}(T\log T)^{-m}\right)< F\left( T^{-\frac{m}{\theta}}\right)
\end{equation*}
for all $T>T_1$.

The given condition (1.3) in Theorem 1.1 shows that there exist a $T_2\in (0,+\infty)$ and a $\theta\in (0,1)$ such that for $T>T_2$,
\begin{equation*}
\tag{3.14}
F\left( T^{-\frac{m}{\theta}}\right)\leq T.
\end{equation*}

Combining (3.13) and (3.14), we obtain
\begin{equation*}
F\left(\widetilde{C}(T\log T)^{-m}\right)< T
\end{equation*}
for all $T> \max\{T_1, T_2\}$. However, this contradicts (3.12).

The proof of Theorem 1.1 is complete.
\end{proof}

\section{Proof of Theorem 1.2}

In order to prove Theorem 1.2 we need the following two lemmas


\begin{lemma}[Strong Maximum Principle]
\textnormal{Let $G=(V,E)$ be a locally finite connected graph and $\Omega\subset V$ be finite.
For any $T>0$, we assume that $v(t,x)$ is bounded and continuous with respect to $t$ in $[0,T)\times \Omega$, which satisfies
\begin{equation*}
\tag{4.1}
\begin{cases}
\frac{\partial}{\partial t}v(t,x)-\Delta_\Omega v(t,x) -k(t,x)v(t,x)\geq 0, &\quad (t,x)\in (0,T)\times \Omega^\circ,\\
v(0,x)\geq 0, &\quad x\in \Omega^\circ,\\
v(t,x)\geq 0, &\quad (t,x)\in [0,T)\times \partial\Omega,
\end{cases}
\end{equation*}
where $k(t,x)$ is bounded in $[0,T)\times \Omega^\circ$. Then, $v(t,x)\geq 0$ in $[0,T)\times \Omega$.
}
\end{lemma}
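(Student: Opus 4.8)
The plan is to argue by contradiction using a standard perturbation-and-first-touching-point argument adapted to the finite graph setting, where the spatial operator $\Delta_\Omega$ is a bounded linear operator on the finite-dimensional space $C(\Omega^\circ)$. First I would reduce to a strict inequality: for $\epsilon>0$ set $v_\epsilon(t,x)=v(t,x)+\epsilon e^{Lt}$ on $\Omega$ for a constant $L$ to be chosen, where the exponential weight is designed to absorb both the zeroth-order term $k(t,x)v$ and the ``leakage'' produced by $\Delta_\Omega$ when it acts on the constant $\epsilon$ (recall that $\Delta_\Omega$ applied to a nonzero constant is generally negative on vertices adjacent to $\partial\Omega$, with absolute value bounded by $2D_\mu$). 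Using boundedness of $k$ on $[0,T)\times\Omega^\circ$, say $|k|\le K$, and $|\Delta_\Omega \mathbf{1}(x)|\le 2D_\mu$, one checks that choosing $L > K + 2D_\mu$ (plus $K$ times anything needed to dominate) gives the strict differential inequality
\begin{equation*}
\frac{\partial}{\partial t}v_\epsilon(t,x)-\Delta_\Omega v_\epsilon(t,x)-k(t,x)v_\epsilon(t,x) > 0, \qquad (t,x)\in(0,T)\times\Omega^\circ,
\end{equation*}
while $v_\epsilon(0,x)>0$ on $\Omega^\circ$ and $v_\epsilon(t,x)>0$ on $[0,T)\times\partial\Omega$. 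If I prove $v_\epsilon\ge 0$ (indeed $>0$) on $[0,T)\times\Omega$ for every $\epsilon>0$, then letting $\epsilon\to 0$ yields $v\ge 0$ on $[0,T)\times\Omega$, which is the claim.

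Next I would run the first-touching-point argument for $v_\epsilon$. Fix any $T'<T$; it suffices to work on the compact time interval $[0,T']$ since $T'$ is arbitrary. Suppose $v_\epsilon$ becomes negative somewhere on $[0,T']\times\Omega$, and let
\begin{equation*}
t_0 = \inf\{\, t\in[0,T'] : \min_{x\in\Omega} v_\epsilon(t,x) \le 0 \,\}.
\end{equation*}
Because $\Omega$ is finite, for each fixed $t$ the minimum over $x\in\Omega$ is attained; because $v_\epsilon$ is continuous in $t$, the function $t\mapsto\min_{x\in\Omega}v_\epsilon(t,x)$ is continuous. Since $v_\epsilon>0$ at $t=0$ on all of $\Omega$ (on $\Omega^\circ$ from the hypothesis $v(0,\cdot)\ge 0$ plus the $\epsilon$ term, on $\partial\Omega$ from the boundary hypothesis), we get $t_0>0$, and at $t=t_0$ the minimum equals $0$, attained at some vertex $x_0\in\Omega$ with $v_\epsilon(t_0,x_0)=0$ and $v_\epsilon(t_0,x)\ge 0$ for all $x\in\Omega$, while $v_\epsilon(t,x)>0$ for all $t<t_0$ and all $x$. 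The boundary and initial positivity force $x_0\in\Omega^\circ$. Now at $(t_0,x_0)$: first, $\frac{\partial}{\partial t}v_\epsilon(t_0,x_0)\le 0$, since $v_\epsilon(t,x_0)>0=v_\epsilon(t_0,x_0)$ for $t<t_0$ gives a nonpositive left derivative (and continuity in $t$ together with the equation gives differentiability there); second,
\begin{equation*}
\Delta_\Omega v_\epsilon(t_0,x_0)=\frac{1}{\mu(x_0)}\sum_{y\sim x_0}\omega_{x_0 y}\bigl(v_\epsilon(t_0,y)-v_\epsilon(t_0,x_0)\bigr)=\frac{1}{\mu(x_0)}\sum_{y\sim x_0}\omega_{x_0 y}\, v_\epsilon(t_0,y)\ge 0,
\end{equation*}
using $v_\epsilon(t_0,y)\ge 0$ everywhere (with the convention that $v_\epsilon(t_0,y)$ for $y\notin\Omega^\circ$ is taken as the extension by $0$, which is still $\ge 0$, as in the definition of $\Delta_\Omega$); third, $k(t_0,x_0)v_\epsilon(t_0,x_0)=0$. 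Therefore
\begin{equation*}
\frac{\partial}{\partial t}v_\epsilon(t_0,x_0)-\Delta_\Omega v_\epsilon(t_0,x_0)-k(t_0,x_0)v_\epsilon(t_0,x_0)\le 0,
\end{equation*}
contradicting the strict inequality established above. Hence no such $t_0$ exists, $v_\epsilon>0$ on $[0,T']\times\Omega$, and since $T'<T$ was arbitrary, $v_\epsilon>0$ on $[0,T)\times\Omega$.

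The main obstacle, and the point deserving care, is the treatment of the time derivative at the first-touching point $t_0$: one must be sure that $\frac{\partial}{\partial t}v_\epsilon(t_0,x_0)$ exists and is $\le 0$. This is where the hypothesis that $v$ (hence $v_\epsilon$) is continuous in $t$ on $[0,T)$ and satisfies the differential inequality on $(0,T)$ is used — the inequality at interior times gives the derivative there, and a one-sided argument ($v_\epsilon(t,x_0)-v_\epsilon(t_0,x_0)\ge 0$ for $t<t_0$ implies the left derivative is $\le 0$) closes the gap; alternatively, since $\Delta_\Omega v_\epsilon$ and $k v_\epsilon$ are continuous in $t$, the equation itself shows $\partial_t v_\epsilon$ extends continuously to $t_0$, so one may evaluate directly. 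A secondary, purely bookkeeping point is the choice of the constant $L$ so that the perturbation term $\epsilon e^{Lt}$ strictly dominates; this only uses boundedness of $k$ and the uniform bound $|\Delta_\Omega\mathbf 1|\le 2D_\mu$ coming from $D_\mu<\infty$, and involves no subtlety. Everything else is finite-dimensional linear algebra and elementary calculus.
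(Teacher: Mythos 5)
Your proof is correct, and it takes a genuinely different route from the paper. The paper argues directly on $v$: it picks a global minimum point $(t_0,x_0)$ of $v$ over $(0,T)\times\Omega^\circ$, assumes $v(t_0,x_0)<0$, and splits into two cases according to the sign of $k(t_0,x_0)$, handling the case $k(t_0,x_0)\geq 0$ by the exponential change of variables $\psi=ve^{-lt}$ with $l>k(t_0,x_0)$; at the minimum it uses $v_t(t_0,x_0)\leq 0$ and $\Delta_\Omega v(t_0,x_0)\geq 0$ to reach a contradiction. You instead absorb the zeroth-order term once and for all into the perturbation $v_\epsilon=v+\epsilon e^{Lt}$ (your bound $|\Delta_\Omega\mathbf 1|\leq 2D_\mu$ is a harmless overestimate; in fact $-\Delta_\Omega\mathbf 1\geq 0$, so even $L>K$ suffices), obtain a strict differential inequality, and run a first-touching-time argument on a compact time slab $[0,T']$, $T'<T$, before letting $\epsilon\to 0$ and $T'\to T$. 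What your version buys is twofold: it removes the case analysis on the sign of $k$, and it sidesteps a gap the paper glosses over, namely that a bounded continuous $v$ need not attain its infimum on the non-compact set $(0,T)\times\Omega^\circ$; your $t_0$ is the infimum of a closed subset of the compact interval $[0,T']$ and is genuinely attained, and the strictness from the $\epsilon$-term turns the soft inequalities $\partial_t v_\epsilon(t_0,x_0)\leq 0$, $\Delta_\Omega v_\epsilon(t_0,x_0)\geq 0$, $k v_\epsilon(t_0,x_0)=0$ into an honest contradiction. What the paper's route buys is brevity: no perturbation or limiting step, with the exponential weight appearing only in the single case where it is needed. Both arguments rest on the same two graph-theoretic facts, the sign of $\Delta_\Omega$ at a nonpositive spatial minimum (using the zero extension outside $\Omega^\circ$) and the one-sided sign of the time derivative at a first minimum in time, and both implicitly use that the differential inequality presupposes $\partial_t v$ exists for $t\in(0,T)$.
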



\begin{proof}[Proof]
Set $\mathcal{L}v= v_t -\Delta_\Omega v-kv$.
Then, we have $\mathcal{L}v\geq 0$ in $(0,T)\times \Omega^\circ$.
Let $(t_0,x_0)$ be a minimum point of function $v$ in $(0,T)\times \Omega^\circ$.
In the following, we shall prove $v(t_0,x_0)\geq0$ by contradiction.

Assume that $v(t_0,x_0)<0$.
Case 1. If $k(t_0,x_0)<0$, we have
\begin{equation*}
\mathcal{L}v(t_0,x_0)<v_t(t_0,x_0)-\Delta_\Omega v(t_0,x_0).
\end{equation*}
Note that the function $t\mapsto v(t,x_0)$ in $(0,T)$ attains its minimum at $t=t_0$, we obtain
\begin{equation*}
\tag{4.2}
v_t(t_0,x_0)\leq 0
\end{equation*}
(if $t_0<T$, then $v_t(t_0,x_0)=0$). Hence, we conclude that
\begin{equation*}
\tag{4.3}
\mathcal{L}v(t_0,x_0) < -\Delta_\Omega v(t_0,x_0).
\end{equation*}

On the other hand, the function $x\mapsto v(t_0,x)$ in $\Omega^\circ$ attains the minimum at $x=x_0$, thus
\begin{equation*}
\tag{4.4}
\Delta_\Omega v(t_0,x_0)=\frac{1}{\mu(x_0)}\sum_{y\sim x_0}\omega_{x_0y}\big(v(t_0,y)-v(t_0,x_0)\big)\geq 0.
\end{equation*}
Substituting (4.4) into (4.3), we obtain
$\mathcal{L}v(t_0,x_0)<0$,
which is a contradiction with $\mathcal{L}v\geq 0$ in $(0,T)\times \Omega^\circ$.

Case 2. If $k(t_0,x_0)\geq0$,
since $k(t,x)$ is bounded in $[0,T)\times \Omega^\circ$,
we perform a transformation $\psi=v e^{-l t}$\,$\big(l>k(t_0,x_0)\big)$, then we get
\begin{eqnarray*}
\mathcal{L}v(t_0,x_0) &= & v_t(t_0,x_0)-\Delta_\Omega v(t_0,x_0)-k(t_0,x_0)v(t_0,x_0)\\
&\xlongequal {v =e^{l t}\psi}& e^{l t_0}\left( \psi_t(t_0,x_0)-\Delta_\Omega\psi(t_0,x_0)-(k(t_0,x_0)-l)\psi(t_0,x_0) \right)\\
&=& e^{l t_0} \widetilde{\mathcal{L}}\psi(t_0,x_0),
\end{eqnarray*}
where $\widetilde{\mathcal{L}}\psi \equiv \psi_t-\Delta_\Omega\psi-(k-l)\psi$.

Owing to $k(t_0,x_0)-l<0$ and $\psi(t_0,x_0)=v(t_0,x_0)e^{-lt_0}<0$, we have
\begin{equation*}
\tag{4.5}
\begin{split}
\widetilde{\mathcal{L}}\psi(t_0,x_0)&<\psi_t(t_0,x_0)-\Delta_\Omega \psi(t_0,x_0)\\
&=e^{-lt_0}\left(v_t(t_0,x_0)-\Delta_\Omega v(t_0,x_0)\right).
\end{split}
\end{equation*}
Combining (4.2),(4.4) and (4.5), we obtain $\widetilde{\mathcal{L}}\psi(t_0,x_0)<0$.
So, $\mathcal{L}v(t_0,x_0)=e^{l t_0} \widetilde{\mathcal{L}}\psi(t_0,x_0)<0$.
This contradicts with $\mathcal{L}v\geq 0$ in $(0,T)\times \Omega^\circ$.

Hence, $v(t_0,x_0)\geq 0$. Furthermore, since $v(t,x)$ in $(0,T)\times \Omega^\circ$ attains its minimum at $(t_0,x_0)$,
we have $v(t,x)\geq 0$ in $(0,T)\times \Omega^\circ$. In addition, from (4.1) we find
$v(t,x)\geq 0$ in $(\{0\}\times\Omega^\circ)\cup ([0,T)\times \partial\Omega).$
Thus $v(t,x)\geq 0$ in $[0,T)\times \Omega$.

The proof of Lemma 4.1 is complete.

\end{proof}


\begin{lemma}[Comparison Principle]
\textnormal{Let $G=(V,E)$ be a locally finite connected graph and $\Omega\subset V$ be finite.
For any $T>0$, we assume that $u(t,x)$ and $\underline{u}(t,x)$ are bounded and continuous with respect to $t$ in $[0,T)\times \Omega$, which satisfy
\begin{equation*}
\tag{4.6}
\begin{cases}
u_t-\Delta_\Omega u -g(u)\geq \underline{u}_t-\Delta_\Omega \underline{u} -g(\underline{u}), &\quad (t,x)\in (0,T)\times \Omega^\circ,\\
u(0,x) \geq \underline{u}(0,x), &\quad x\in \Omega^\circ,\\
u(t,x) \geq \underline{u}(t,x), &\quad (t,x)\in [0,T)\times \partial\Omega,
\end{cases}
\end{equation*}
where $g\in C^1[M_1,M_2]$ and $M_1\leq \underline{u},u \leq M_2$ for any $(t,x)\in [0,T)\times \Omega$.
Then, $u\geq \underline{u}$ in $[0,T)\times \Omega$.
}
\end{lemma}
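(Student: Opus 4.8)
The plan is to reduce the Comparison Principle to the Strong Maximum Principle (Lemma 4.1) via the standard linearization trick. First I would set $v = u - \underline{u}$. By hypothesis $u$ and $\underline{u}$ are bounded and continuous with respect to $t$ in $[0,T)\times\Omega$, so the same holds for $v$; and from the second and third lines of (4.6) one reads off immediately that $v(0,x)\geq 0$ on $\Omega^\circ$ and $v(t,x)\geq 0$ on $[0,T)\times\partial\Omega$.

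Next I would treat the nonlinear term. Subtracting the two differential inequalities in (4.6) gives, for $(t,x)\in(0,T)\times\Omega^\circ$,
$$v_t(t,x) - \Delta_\Omega v(t,x) \;\geq\; g(u(t,x)) - g(\underline{u}(t,x)).$$
Since $g\in C^1[M_1,M_2]$ and $M_1 \le \underline{u}(t,x),\, u(t,x)\le M_2$, the mean value theorem yields, for each $(t,x)$, a number $\xi(t,x)$ lying between $\underline{u}(t,x)$ and $u(t,x)$ — hence in $[M_1,M_2]$ — with $g(u)-g(\underline{u}) = g'(\xi(t,x))\,(u-\underline{u})$. Setting $k(t,x) := g'(\xi(t,x))$, this becomes
$$v_t(t,x) - \Delta_\Omega v(t,x) - k(t,x)\,v(t,x) \;\geq\; 0 , \qquad (t,x)\in(0,T)\times\Omega^\circ .$$
Because $g'$ is continuous on the compact interval $[M_1,M_2]$, it is bounded there, so $k$ is bounded on $[0,T)\times\Omega^\circ$.

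With these facts in hand, $v$ satisfies exactly the hypotheses (4.1) of Lemma 4.1 (differential inequality with bounded coefficient $k$, nonnegative initial data, nonnegative boundary data). I would then simply invoke Lemma 4.1 to conclude $v(t,x)\geq 0$ on $[0,T)\times\Omega$, i.e. $u\geq\underline{u}$ there, which is the assertion.

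The only point requiring a little care — the ``main obstacle'', such as it is — is the legitimacy of the coefficient $k(t,x)$: one must ensure it is a bounded function so that Lemma 4.1 is applicable. This is guaranteed by the $C^1$ hypothesis on $g$ together with the uniform bounds $M_1\le\underline{u},u\le M_2$; note that Lemma 4.1 asks only for boundedness of $k$, not for any regularity in $(t,x)$, so the pointwise selection of $\xi(t,x)$ causes no difficulty. (If one prefers to avoid such a selection, one may instead take $k(t,x)=\int_0^1 g'\bigl(\underline{u}(t,x)+\theta(u(t,x)-\underline{u}(t,x))\bigr)\,d\theta$, which represents the same difference quotient and is manifestly bounded by $\sup_{[M_1,M_2]}|g'|$.)
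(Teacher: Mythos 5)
Your proposal is correct and follows essentially the same route as the paper: set $v=u-\underline{u}$, linearize $g(u)-g(\underline{u})$ as $k(t,x)v$ with $k$ bounded (the paper uses the difference quotient, set to $0$ where $u=\underline{u}$, and the mean value theorem for the bound, which is just a different bookkeeping of your $g'(\xi(t,x))$), and then apply Lemma 4.1.
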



\begin{proof}[Proof]
Set $v=u-\underline{u}$. Then
$$v_t-\Delta_\Omega v \geq g(u)-g(\underline{u})$$
in $(0,T)\times \Omega^\circ$.
Define a function by
\begin{equation*}
k(t,x)=
\left\{
\begin{array}{lc}
\frac{ g(u(t,x))- g(\underline{u}(t,x))}{u(t,x)-\underline{u}(t,x)} &\quad \text{for $(t,x)$ such that $u(t,x)\neq \underline{u}(t,x)$}\\
0 &\quad \text{for $(t,x)$ such that $u(t,x)=\underline{u}(t,x)$}.
\end{array}
\right.
\end{equation*}
Since $M_1\leq \underline{u},u \leq M_2$
for any $(t,x)\in [0,T)\times \Omega$ and $g\in C^1[M_1,M_2]$,
by the Mean-value theorem, we deduce that $k(t,x)$ is bounded in $[0,T)\times \Omega$.

Moreover, it follows from the definition of $k(t,x)$ that $g(u)-g(\underline{u})=kv$.
Hence, for any $(t,x)\in (0,T)\times \Omega^\circ$,
$$v_t-\Delta_\Omega v \geq kv.$$

Thus, the assertion of the Lemma 4.2 follows immediately from the Lemma 4.1.

\end{proof}


\begin{proof}[Proof of Theorem 1.2.]
We consider the function
\begin{equation*}
\tag{4.7}
J(t)=\sum_{x\in \Omega^\circ}\mu(x)u(t,x)\phi_1(x) \quad \quad (t\geq0),
\end{equation*}
where $\phi_1$ is a eigenfunction corresponding to the first eigenvalue $\lambda_1(\Omega)$
(see Section 2).
It is clear that $J(0)\equiv \kappa$,
$J(t)$ is continuous with respect to $t$ and well-defined on the existence interval of the solution $u$.


Firstly, we show that $J(t)$ is positive for all $t\in [0,+\infty)$.

In fact,
from the Lemma 4.2 and the assumptions in Theorem 1.2 related to $f$, we find $u(t,x)\geq 0$ in $[0,\infty)\times \Omega$
when $a(x)$ is not negative in $\Omega^\circ$.

Set $\Omega_1:=\{x\in \Omega^\circ:a(x)> 0\}$. Since $a(x)$ is not trivial in $\Omega^\circ$,
we get $\Omega_1\neq \varnothing$.
For any $z\in \Omega_1$, due to the fact that
$f$ is non-negative in $[0,+\infty)$, we have
\begin{equation*}
\tag{4.8}
\frac{\partial u}{\partial t}(t,z)-\Delta_\Omega u(t,z)\geq 0 \quad \quad (t\geq0).
\end{equation*}
By elementary calculus, we deduce that
\begin{equation*}
\tag{4.9}
\begin{split}
\Delta_\Omega u(t,z)
&\geq -\frac{1}{\mu(z)}\sum_{y\sim z}\omega_{z y}u(t,z)\\
&\geq -D_\mu u(t,z).
\end{split}
\end{equation*}
Combining (4.8) and (4.9), we verify that
\begin{equation*}
\tag{4.10}
\frac{\partial u}{\partial t}(t,z)\geq -D_\mu u(t,z).
\end{equation*}
The inequality (4.10) implies
\begin{equation*}
u(t,z)\geq a(z)\exp(-D_\mu t)>0  \quad \quad (t\geq0).
\end{equation*}

Hence, for any $t\geq 0$, we have
$$\sum_{x\in \Omega^\circ}u(t,x)>0.$$
Since $\mu(x)$ and $\phi_1(x)$ are positive in $\Omega^\circ$,
we conclude $J(t)>0$ for $t\in [0,+\infty)$.\\


Secondly, we prove that
\begin{equation*}
J'(t)\geq -\lambda_1J(t)+f(J(t)).
\end{equation*}

By using the fact that $\Delta_\Omega$ is self-adjoint, we have
\begin{equation*}
\tag{4.11}
\begin{split}
J'(t)=&\sum_{x\in \Omega^\circ}\mu(x)u_t(t,x)\phi_1(x)\\
=&\sum_{x\in \Omega^\circ}\mu(x)\phi_1(x)\Delta_\Omega u(t,x)+\sum_{x\in \Omega^\circ}\mu(x)\phi_1(x)
f(u(t,x))\\
=&\sum_{x\in \Omega^\circ}\mu(x)\Delta_\Omega \phi_1(x)u(t,x)+\sum_{x\in \Omega^\circ}\mu(x)\phi_1(x)f(u(t,x))\\
=&-\lambda_1\sum_{x\in \Omega^\circ}\mu(x)\phi_1(x)u(t,x)+\sum_{x\in \Omega^\circ}\mu(x)\phi_1(x)
f(u(t,x)).
\end{split}
\end{equation*}
Since $\sum_{x\in \Omega^\circ}\mu(x)\phi_1(x)=1$, by Jensen's inequality, we obtain
\begin{equation*}
\tag{4.12}
\sum_{x\in \Omega^\circ}\mu(x)\phi_1(x)f(u(t,x))\geq f\left(
\sum_{x\in \Omega^\circ}\mu(x)\phi_1(x)u(t,x)\right)=f(J(t)).
\end{equation*}
Combining (4.11) with (4.12) gives
\begin{equation*}
\tag{4.13}
J'(t)\geq -\lambda_1J(t)+f(J(t)).
\end{equation*}


Thirdly, we claim that there exists a positive constant $K>\kappa$\, such that $f(\tau)>2\lambda_1\tau$ for
$\tau\geq K.$

We prove this assertion by contradiction. Assume that for any $K>\kappa$, there exists a $\tau^*\geq K$
such that $$f(\tau^*)\leq 2\lambda_1\tau^*.$$

Based on the assumption above,
given a fixed $K_0>\kappa$,
there exists a $\tau_0^*\geq K_0$ such that $$f(\tau_0^*)\leq 2\lambda_1\tau_0^*.$$
In addition,
for any $K> \tau_0^*> \kappa$, there exists a $\tau_k^*\geq K$
such that $$f(\tau_k^*)\leq 2\lambda_1\tau_k^*.$$

Since $f(\tau)$ is convex in $[0,+\infty)$, for any $\tau\in \left(\tau_0^*,\tau_k^*\right)$,
we have
\begin{equation*}
\begin{split}
f(\tau)&\leq \frac{\tau_k^*-\tau}{\tau_k^*-\tau_0^*}f(\tau_0^*)+\frac{\tau-\tau_0^*}{\tau_k^*-\tau_0^*}f(\tau_k^*)\\
&\leq 2\lambda_1\tau_0^*\cdot\frac{\tau_k^*-\tau}{\tau_k^*-\tau_0^*}+ 2\lambda_1\tau_k^*\cdot\frac{\tau-\tau_0^*}{\tau_k^*-\tau_0^*}\\
&=2\lambda_1\tau.
\end{split}
\end{equation*}
In view of $\tau_0^*< K\leq\tau_k^*$, we get
\begin{equation*}
f(\tau)\leq 2\lambda_1\tau,
\quad \quad \tau\in(\tau_0^*,K).
\end{equation*}
Because $K$ is an arbitrary number that is greater than $\tau_0^*$, we obtain for all $\tau\in (\tau_0^*,+\infty)$,
\begin{equation*}
\tag{4.14}
f(\tau)\leq 2\lambda_1\tau.
\end{equation*}

Since $f$ is positive in $(\tau_0^*,+\infty)$, the inequality (4.14) implies
\begin{equation*}
\frac{1}{f(\tau)}\geq \frac{1}{2\lambda_1\tau}
\end{equation*}
for all $\tau\in (\tau_0^*,+\infty)$, which contradicts with the fact that $\frac{1}{f}$ is integrable at
$\tau\rightarrow \infty$.
Hence, there exists $K>\kappa$ such that $f(\tau)>2\lambda_1\tau$ for
$\tau\geq K$.


Now, we are in a position to prove the assertion of Theorem 1.2.

We note that $f(\tau)$ and $f(\tau)-\lambda_1 \tau$ are positive in $(\kappa,+\infty)$, thus
\begin{equation*}
\tag{4.15}
\frac{1}{f(\tau)-\lambda_1\tau}<\frac{2}{f(\tau)} \quad \quad (\tau\geq K).
\end{equation*}
Since $\int^{+\infty}_K\frac{d\tau}{f(\tau)}<+\infty$, the inequality (4.15) implies
\begin{equation*}
\int_{K}^{+\infty}\frac{d\tau}{f(\tau)-\lambda_1 \tau}< +\infty.
\end{equation*}
This yields
\begin{equation*}
\tag{4.16}
\int_{J(0)}^{+\infty}\frac{d\tau}{f(\tau)-\lambda_1 \tau}
=\int_{J(0)}^{K}\frac{d\tau}{f(\tau)-\lambda_1 \tau}
+\int_{K}^{+\infty}\frac{d\tau}{f(\tau)-\lambda_1 \tau} <+\infty.
\end{equation*}
It is clear that there is no singularity in the above integral,
since $J(0)\equiv\kappa>0$ and
$f(\tau)$, $f(\tau)-\lambda_1 \tau$ are positive in $[\kappa,+\infty)$.

On the other hand,
it follows from (4.13) that
\begin{equation*}
\tag{4.17}
\int_{J(0)}^{J(t)}\frac{dJ}{f(J)-\lambda_1 J}\geq t.
\end{equation*}

Letting $t\rightarrow +\infty$ in (4.17) leads to a contradiction with (4.16), which
together with the property of continuation on the positive classical solution yields that
$J(t)\rightarrow +\infty$ as $t\rightarrow T^-$.

This completes the proof of Theorem 1.2.
\end{proof}

\section*{Acknowledgments}
This research is supported by the National Science Foundation of China (Grant No.11671401).

\def\refname{\Large\textbf{References}}

\vspace{10pt}

{\setlength{\parindent}{0pt}
Yong Lin,\\
Department of Mathematics, Renmin University of China,  Beijing, 100872, P. R. China\\
linyong01@ruc.edu.cn\\
Yiting Wu,\\
Department of Mathematics, Renmin University of China,  Beijing, 100872, P. R. China\\
yitingly@126.com}

\end{document}